\begin{document}

\title{Best proximity points for proximal contractions}
\author{Aurora Fern\'andez-Le\'on}

\institute{A. Fern\'andez-Le\'on \at Departamento de An\'alisis Matem\'atico, Universidad de Sevilla, Apdo. 1160, 41080 Sevilla, Spain\\\email{aurorafl@us.es\\Tel: 0034649160497}
}
\date{}
\maketitle

\begin{abstract}

In this paper we improve and extend some best proximity point results concerning the so-called proximal contractions. Specifically, compactness assumptions
under the sets $A$ and $B$ are removed to consider completeness conditions instead.

\keywords{Proximal contraction \and fixed point \and best proximity point \and approximate solution \and metric space}
\subclass{Primary 54E40 \and 54E35 \and 54H25 \and 41A65; Secondary 47H10}

%
%

\end{abstract}

\newpage

%
%
%
%

\section{Introduction}
A classical and very well-studied problem in Metric
Fixed Point Theory is the existence of fixed point of single-valued
non-self mappings $T$ defined from a set $A$ to a set $B$, where $A$ and $B$ are subsets of a metric space $X$. Since the relation between these
two sets $A$ and $B$ may have so different natures ($A \cap B= \emptyset$, $A \cap B \neq \emptyset$, $A \subseteq B$, etc...), the branch of the theory that deals with these mappings has not
only studied the classical fixed point problem that tries to find solutions of
the equation $Tx=x$ but also has considered those situations where the existence of fixed points may even make no
sense. In these special cases, the interest of the theory mainly focuses on the existence of approximate solutions of the equation $Tx=x$, that is, points that are
close to its image somehow. In this direction, one
 classical well-known best approximation theorem due to Fan \cite{fan} is the one that states that if $A$ is a compact,
 convex and nonempty subset of a Hausdorff locally convex topological vector space $X$ and $T$ is a continuous mapping from $A$ to $X$, then there exists a point $x\in A$ such that $d(x,Tx)=d(Tx,A)$, where $d$ is the semi-metric induced by a continuous semi-norm defined on $X$ (notice that $d$ is a metric if the semi-norm is actually a norm). Both best approximation
theorems and best proximity point theorems are results in the theory that search approximate solutions, that is, points in $A$ (the domain of $T$) satisfying the condition $d(x,Tx)=d(Tx,A)$. The main difference between these two families of theorems is that the second one tries to go further in the approximation by searching approximate solutions which in turn are solutions of the minimization problem of the function $d(x,Tx)$ on $A$. In fact, best approximation results do not care whether the error $d(x,Tx)$ is the minimum possible in the context. To be precise, best proximity point theorems assure, for mappings $T$ defined from $A$ to $B$, the existence of approximate solutions that minimize the value $d(x,Tx)$ in the set $A$ and moreover with respect to $B$, i.e., guarantee the existence of points $x\in A$ such that $d(x,Tx) =d(A,B)$, where $d(A,B)$ is actually the absolute infimum of $d(x,Tx)$ in this context. Points that satisfy the previous equality are known in the theory either as absolute optimal approximate solutions or as best proximity points.

The study of the existence of best proximity points have been tackled in the theory from many different approaches. In this regard, generalizations of best proximity point theorems from self-maps to non-self-maps have arisen in the past two decades (see for example \cite{sha2,beer,kim,kire,save1,save2,seh1,srini}). Besides, a relevant research line is the one that seek best proximity points of cyclic mappings, i.e., self-mappings $T$ defined on the union of $A$ and $B$ and satisfying that $T(A)$ is contained in $B$ and $T(B)$ is contained in $A$. Many publications have arisen by assuming several additional metric conditions on these cyclic mappings, see for instance \cite{sha,diba,elve,elvek,espi,fer,kive,kole,suzu} and references therein. Our work developed here aims however to find best proximity points of classical non-self mappings that satisfy certain contractive conditions.

The present paper is basically motivated by the papers
\cite{Sadiq3} and \cite{Sadiq4}. These works include
existence, uniqueness and convergence results of best proximity
points for proximal contractions, thereby extending Banach's
contraction principle to the case of non-self mappings. Most of the results there considered
are proved by assuming some compactness properties under the sets $A$ and $B$
between which a proximal contraction $T$ is defined. The feeling that these hypothesis
are too restrictive in the context of proximal contractions has encouraged us to go into this
topic. In fact, the results we prove in the sequel come directly from an attempt to
improve in this direction the main results in \cite{Sadiq3} and \cite{Sadiq4}.



The work is organized as follows: in Section 2 we introduce most
of the definitions, notations and previous results we will need.
In Section 3, we find a weaker condition for a set $A$ than the fact of being approximatively compact
with respect to a set $B$ to guarantee the existence of best proximity point of proximal contractions of
the first and second kind. In this regard, we consider classical completeness assumptions as the fact
of being closed to improve the main results in \cite{Sadiq3} and \cite{Sadiq4}. Finally, we include an appendix where problems studied in Section \ref{sec2} are considered from a different approach.

\section{Preliminaries}\label{se2}

In this section we introduce the main concepts and results we will
need throughout this work. We begin by fixing some notations. Let
$(X,d)$ be a metric space and let $A$ and $B$ be two nonempty
subsets of $X$. Define
\begin{align*}
& d(x,A)=  \inf \{d(x,y) : y \in A\},\\
&d(A,B)= \inf \{d(x,y) : x\in A,y\in B\},\\
&A_0 =  \{x \in A : d(x, y) = d(A, B) \text{ for some } y \in B \},\\
&B_0 =  \{y \in B : d(x, y) = d(A, B) \text{ for some } x \in A \}.
\end{align*}
 From now on, $B(a,r)$ will denote the closed ball in the space $X$
centered at $a\in X$ with radius $r>0$.

Next, we define a notion of compactness between sets given in \cite{Sadiq3*}. This concept will play an essential role in this paper.

\begin{definition} Let $A$ and $B$ be two nonempty subsets of a metric space $X$.
$A$ is said to be approximatively compact with respect to $B$ if every sequence
$\{x_n\}$ of $A$ satisfying the condition that $d(y, x_n) \rightarrow d(y, A)$ for some $y \in B$ has a convergent
subsequence.
\end{definition}

It is worth mentioning that if a set $A$ is compact and moreover $B$ is approximatively compact with respect to $A$, then
the sets $A_0$ and $B_0$ are non-empty.

By the end of this paper we consider in several occasions some
geodesic spaces. Next, we include some background
concerning these metric spaces. A metric space
$(X,d)$ is said to be a {\it geodesic space } if every two points
$x$ and $y$ of $X$ are joined by a geodesic, i.e, a map
$c:[0,l]\subseteq {\mathbb R}\to X$ such that $c(0)=x$, $c(l)=y$,
and $d(c(t),c(t^{\prime}))=|t-t^{\prime}|$ for all $t,t^{\prime}
\in [0,l]$. Moreover, $X$ is called {\it uniquely geodesic } if
there is exactly one geodesic joining $x$ and $y$ for each $x,y\in
X$. When there is only one geodesic between two points $x$ and
$y$, the image of this geodesic (called geodesic segment) is
denoted by $[x,y]$. Thus, any Banach space is a
geodesic space with usual segments as geodesic segments.

A subset $A$ of a uniquely geodesic metric space $X$ is said to be
{\it convex} if the geodesic segment joining each pair of points
$x$ and $y$ of $A$ is contained in $A$. For more about geodesic
spaces the reader can check \cite{brha,bubu,papa}.

The notion of strict convexity in geodesic spaces was introduced in \cite{ahu} for strongly convex
metric spaces \cite{rol}, a wider family of metric spaces than the
one of geodesic metric spaces. Notice that this notion trivially
extends the strict convexity known in normed spaces to the nonlinear
setting.

\begin{definition} A geodesic metric space $X$ is said to be strictly
convex if for every $r>0$, $a,x$ and $y\in X$ with $d(x,a)\le r$,
$d(y,a)\le r$ and $x \neq y$, it is the case that $d(a,p)< r,$
 where $p$ is any point between $x$ and $y$ such that
$p\neq x$ and $p\neq y$, i.e., $p$ is any point in the interior of a
geodesic segment that joins $x$ and $y$.
\end{definition}
In these spaces, as it happens in strictly convex Banach spaces,
if $x$ and $y$ are any two points on the boundary of a ball then
the interior of the segment $[x,y]$ lies strictly inside the
ball. Moreover, it is easy to see that strictly convex metric spaces are in fact
uniquely geodesic.

\begin{example} Spaces of nonpositive curvature in the sense of Busemann
(see \cite{papa} for a detailed study on them) are strictly convex metric spaces.
Note that consequently the well-known CAT($0$) spaces are strictly convex metric spaces.
\end{example}

In the next section we will also mention the so-called nonempty
intersection property. Notice that this property extends the notion of reflexivity in
Banach spaces to geodesic metric spaces.

\begin{definition} Let $X$ be a uniquely geodesic metric space. $X$ is said to have
the nonempty intersection property if for any sequence $\{C_n\}$
of subsets of $X$ such that $C_{n+1} \subseteq C_n \textrm{  } \forall
n\in \mathbb{N}$ and $C_n$ is closed convex bounded and nonempty
$\textrm{  } \forall n\in \mathbb{N}$, it is the case that
$\displaystyle{\bigcap_{n\in {\mathbb N}} C_n }\neq \emptyset$.
\end{definition}

A broad family of geodesic spaces that satisfy this property is the
 one of uniformly convex metric spaces with either a monotone or lower
semicontinuous from the right modulus of convexity (see Definition 2.1 in \cite{esfe2} and Proposition
 2.2 in \cite{kole}). This uniform convexity in metric spaces is the result of assuming uniformity
 conditions on the definition of strict convexity given above and generalizes
 the notion of uniform convexity in Banach spaces. Thus, any uniformly convex Banach space is uniformly convex is this
sense.

Next we give the definitions of the mappings
we will deal with in the following section. These mappings were
introduced in the recent paper \cite{Sadiq4}. We begin with the definition of proximal contraction
of the first kind.

\begin{definition}\label{def1}
Let $A$ and $B$ be two nonempty subsets of a metric space $X$. A
map $T : A \rightarrow B$ is a proximal contraction of the first kind if
there exists a non-negative number $\alpha < 1$ such that
$$\left. \begin{array}{lc}
&d(u,Tx) = \text{d}(A,B)\\
&d(v,Ty) = \text{d}(A,B)
\end{array} \right \} \Rightarrow d(u,v)\leq \alpha d(x,y),$$
for all $u,v,x,y \in A$.
\end{definition}

It is immediate to see that these mappings are contractions and so continuous if we restrict to the case of self-mappings.
On the contrary, next example given in \cite{Sadiq3*} shows that these mappings are not continuous in general.

\begin{example} Let $A=[0,1]$ and $B=[2,3]$ subsets of $\mathbb{R}$ endowed with the Euclidean metric.
Then the mapping $T : A \rightarrow B$ defined as

$$f(x) = \left\{
\begin{array}{c l}
 3-x & if\text{ x is rational}\\
2+x & otherwise
\end{array}
\right.
$$
is a proximal contraction of the first kind.

\end{example}


Next we give the definition of proximal contraction of the second kind.

\begin{definition}
Let $A$ and $B$ be two nonempty subsets of a metric space $X$. A
mapping $T : A \rightarrow B$ is a proximal contraction of the second kind if
there exists a non-negative number $\alpha < 1$ such that
$$\left. \begin{array}{lc}
&d(u,Tx) = \text{d}(A,B)\\
&d(v,Ty) = \text{d}(A,B)
\end{array} \right \} \Rightarrow d(Tu,Tv)\leq \alpha d(Tx,Ty),$$
for all $u,v,x,y \in A$.
\end{definition}

Notice that unlike the other definition, these mappings may not be even continuous when
we restrict to the self case.

In the sequel, we also consider the following non-self mappings.

\begin{definition} Let $A$ and $B$ be two nonempty subsets of a metric space $X$. Let $g : A \rightarrow A$
be an isometry and $T : A \rightarrow B$ a non-self mapping. The mapping $T$ is said to preserve isometric distance with respect to $g$ if
$$d(Tgx,Tgy) =d(Tx,Ty)$$
for every $x$ and $y$ in $A$.
\end{definition}

A counterpart notion of fixed point in the context of non-self mappings is the so-called
best proximity point.

\begin{definition}
Let $T : A  \rightarrow  B$ be a non-self mapping where $A$ and $B$
are two nonempty subsets of a metric space $X$. A point $x \in A$ is said to be
a best proximity point for $T$ if $d(x,Tx)={\rm d}(A,B)$.
\end{definition}

\section{Main Results and Consequences}\label{sec2}

Our first result is a generalized best proximity point theorem for proximal contractions of the first kind. This theorem extends Theorem $3.1$ in \cite{Sadiq3} to more general settings by assuming weaker conditions
on the sets $A$ and $B$.

\begin{theorem}\label{ext1} Let $A$ and $B$ be two nonempty subsets of
a complete metric space $X$, $T : A \rightarrow B$ a proximal contraction of the first kind and $g : A \rightarrow A$ an isometry. Suppose that $T(A_0) \subseteq B_0$ and $A_0 \subseteq g(A_0)$. If $A_0$ is
nonempty and closed, then there exists a unique point $x \in A$ such that $d(gx,Tx)=d(A,B)$. Moreover, for every $x_0 \in A_0$ there exists a sequence $\{x_n\} \subseteq A$ such that
$d(g x_{n+1},Tx_n)=d(A,B)$ for every $n \geq 0$ and satisfying $x_n \rightarrow x$.
\end{theorem}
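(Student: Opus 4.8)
The plan is to mimic the Banach-type iteration, but built inside $A_0$ and phrased through the isometry $g$. First I would fix $x_0\in A_0$ and construct inductively a sequence $\{x_n\}\subseteq A_0$ satisfying $d(gx_{n+1},Tx_n)=d(A,B)$. Indeed, if $x_n\in A_0$ then $Tx_n\in T(A_0)\subseteq B_0$, so by the definition of $B_0$ there is a point $p\in A_0$ with $d(p,Tx_n)=d(A,B)$; since $A_0\subseteq g(A_0)$, this $p$ can be written as $p=gx_{n+1}$ for some $x_{n+1}\in A_0$, which yields $d(gx_{n+1},Tx_n)=d(A,B)$ and keeps the whole sequence inside $A_0$.

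Next I would show that $\{x_n\}$ is Cauchy. Applying the proximal contraction of the first kind to the two relations $d(gx_{n+1},Tx_n)=d(A,B)$ and $d(gx_{n+2},Tx_{n+1})=d(A,B)$ gives $d(gx_{n+1},gx_{n+2})\le \alpha\, d(x_n,x_{n+1})$. Because $g$ is an isometry, the left-hand side equals $d(x_{n+1},x_{n+2})$, so $d(x_{n+1},x_{n+2})\le \alpha\, d(x_n,x_{n+1})$; the usual geometric estimate with $\alpha<1$ then forces $\{x_n\}$ to be Cauchy. Since $X$ is complete and $A_0$ is closed, the sequence converges to some $x\in A_0$.

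The hard part will be passing to the limit to prove $d(gx,Tx)=d(A,B)$, because a proximal contraction of the first kind need not be continuous, so I cannot simply let $Tx_n\to Tx$. Here is the device I would use: since $x\in A_0$, we have $Tx\in T(A_0)\subseteq B_0$, hence there exists $u\in A_0$ with $d(u,Tx)=d(A,B)$. Comparing this with $d(gx_{n+1},Tx_n)=d(A,B)$ through the proximal contraction gives $d(gx_{n+1},u)\le \alpha\, d(x_n,x)\to 0$, so $gx_{n+1}\to u$. On the other hand, $g$ is continuous (being an isometry) and $x_{n+1}\to x$, so $gx_{n+1}\to gx$; uniqueness of limits yields $gx=u$ and therefore $d(gx,Tx)=d(u,Tx)=d(A,B)$, as desired. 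This simultaneously records the stated convergence $x_n\to x$.

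Finally, uniqueness is immediate: if $x$ and $x^{*}$ both satisfy $d(gx,Tx)=d(A,B)$ and $d(gx^{*},Tx^{*})=d(A,B)$, the proximal contraction gives $d(gx,gx^{*})\le \alpha\, d(x,x^{*})$, and since $g$ is an isometry this reads $d(x,x^{*})\le \alpha\, d(x,x^{*})$; as $\alpha<1$ this forces $x=x^{*}$.
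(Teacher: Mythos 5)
Your proposal is correct and follows essentially the same route as the paper's own proof: the same inductive construction of $\{x_n\}\subseteq A_0$ via $T(A_0)\subseteq B_0$ and $A_0\subseteq g(A_0)$, the same Cauchy estimate through the isometry $g$, the same trick of producing $u\in A_0$ with $d(u,Tx)=d(A,B)$ and identifying $u=gx$ by comparing limits, and the same uniqueness argument. No gaps.
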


\begin{proof} Let $x_0$ be a point in $A_0$. Since $T(A_0) \subseteq B_0$ and $A_0 \subseteq g(A_0)$,
we can find a point $x_1 \in A_0$ such that $d(gx_1,Tx_0)=d(A,B)$. For this point
$x_1 \in A_0$, we can proceed similarly so that we get another point $x_2 \in A_0$ satisfying the condition $d(gx_2,Tx_1)=d(A,B)$. By repeating this process, once we have $x_n \in A_0$, it is possible to find $x_{n+1} \in A_0$ such that $d(gx_{n+1},Tx_n)=d(A,B)$ for every natural number $n$.

Since $T$ is a proximal contraction of the first kind,

$$d(gx_{n+1},gx_n) \leq \alpha d(x_n, x_{n-1})$$
for every $n\in \mathbb{N}$. The isometric character of $g$ shows that in fact

$$d(x_{n+1},x_n) \leq \alpha d(x_n, x_{n-1})$$
for every $n\in \mathbb{N}$. Thus, from this inequality we deduce that $\{x_n\}$ is a Cauchy sequence.
By using the completeness of the space $X$ and the fact that $A_0$ is closed, we conclude that $x_n \rightarrow x \in A_0$.

In addition, the continuity of $g$ implies that $gx_n \rightarrow gx$. Since $gx_n \in A_0$ for every $n\in \mathbb{N}$,
$gx$ is also in the set $A_0$.

Since $x \in A_0$, we have that $Tx \in B_0$ and therefore there exists a point $z \in A_0$ such
that $d(Tx,z)=d(A,B)$. By applying again that $T$ is a proximal contraction, we obtain that
$$d(z,gx_{n+1}) \leq \alpha d(x,x_n).$$
Now, by letting $n$ go to infinity, we have that $d(z,gx_{n+1}) \rightarrow 0$ and consequently $z=gx$. Thus, we conclude
that $d(gx,Tx)=d(A,B)$.

Now the proof follows the same patterns as the proof of \cite[Theorem 3.1]{Sadiq3}. Let $\hat{x}$ be another point in $A_0$ such that
$$d(g\hat{x},T\hat{x})=d(A,B).$$
Then, again by using the properties of $T$ and $g$ and reasoning as above, we get that
$$d(gx,g\hat{x})=d(x,\hat{x}) \leq \alpha d(x,\hat{x}).$$
Consequently, $\hat{x}$ must be equal to $x$ and thus $x$ is the only point in $A_0$ such that $d(gx,Tx)=d(A,B)$, which completes
the proof.
\end{proof}

Next, by considering the mapping $g$ as the identity map on $A$, we get the following consequence. Notice also that this
corollary provides existence and uniqueness of best proximity points in a general metric space for proximal contractions
of the first kind.

\begin{corollary}\label{co1} Let $A$ and $B$ be two nonempty subsets of
a complete metric space $X$ and $T : A \rightarrow B$ a proximal
contraction of the first kind.
Suppose that $T(A_0) \subseteq B_0$. If $A_0$ is
nonempty and closed, then there exists a unique best
proximity point $x \in A$. Moreover, for every $x_0 \in A_0$ there exists a sequence $\{x_n\} \subseteq A$ such that
$d(x_{n+1},Tx_n)=d(A,B)$ for every $n \geq 0$ and satisfying $x_n \rightarrow x$.
\end{corollary}

These two previous results notably improve those
given in \cite{Sadiq3} as Theorem $3.1$ and Corollary $3.1$.
In \cite{Sadiq3}, the set $B$ is assumed to be approximatively compact
with respect to $A$. This topological property, besides being very restrictive, implies
that the set $A_0$ is closed, which is the property required in the present paper. Next proposition easily shows
this assertion.

\begin{proposition}\label{relacion} Let $A$ and $B$ two nonempty subsets of a metric space $X$. If $A$ and $B$ are closed and $A$ is approximatively compact
with respect to $B$, then $B_0$ is closed.

\end{proposition}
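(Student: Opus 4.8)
The plan is to show that $B_0$ is closed by taking an arbitrary convergent sequence inside $B_0$ and proving its limit again lies in $B_0$. So I would start with a sequence $\{y_n\} \subseteq B_0$ with $y_n \rightarrow y$ and aim to produce some $x \in A$ witnessing $d(x,y)=d(A,B)$. Since each $y_n \in B_0 \subseteq B$ and $B$ is closed, the limit $y$ already belongs to $B$, so the only real task is to find the point $x \in A$ realizing the minimal distance to $y$.

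By definition of $B_0$, for each $n$ I can select $x_n \in A$ with $d(x_n,y_n)=d(A,B)$. The central step is then to verify that the sequence $\{x_n\}$ satisfies exactly the hypothesis of approximative compactness relative to the point $y \in B$, namely that $d(y,x_n) \rightarrow d(y,A)$. For the upper estimate I would use the triangle inequality,
$$d(y,x_n) \leq d(y,y_n)+d(y_n,x_n)=d(y,y_n)+d(A,B),$$
and let $n \rightarrow \infty$ to obtain $\limsup_n d(y,x_n) \leq d(A,B)$. For the lower estimate I would observe that, since $y \in B$, one has $d(y,A) \geq d(A,B)$, and trivially $d(y,x_n) \geq d(y,A)$ for every $n$. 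Combining both estimates forces $d(y,A)=d(A,B)$ and $d(y,x_n) \rightarrow d(A,B)=d(y,A)$.

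Having checked this convergence, the hypothesis that $A$ is approximatively compact with respect to $B$ yields a convergent subsequence $x_{n_k} \rightarrow x$; as $A$ is closed, the limit $x$ lies in $A$. Finally, continuity of the metric gives $d(x,y)=\lim_k d(x_{n_k},y)=d(A,B)$, since $d(x_{n_k},y)$ is a subsequence of a sequence converging to $d(A,B)$. This exhibits $x \in A$ realizing $d(x,y)=d(A,B)$, so $y \in B_0$ and $B_0$ is closed.

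I expect the main obstacle to be the verification that $d(y,x_n) \rightarrow d(y,A)$, which is precisely the condition needed to legitimately invoke approximative compactness; the two separate inequalities must be assembled so that the limit distance collapses to $d(A,B)$. The remaining parts (closedness of $A$ and $B$ handing us the limit points, and continuity of the metric passing the equality to the limit) are routine once this convergence is in place.
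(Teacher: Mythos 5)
Your proposal is correct and follows essentially the same route as the paper: extract the witnesses $x_n \in A$ with $d(x_n,y_n)=d(A,B)$, show $d(y,x_n)\rightarrow d(A,B)$, invoke approximative compactness to get a convergent subsequence, and pass to the limit. In fact you are slightly more careful than the paper in explicitly verifying that $d(y,A)=d(A,B)$, which is what legitimizes applying the definition of approximative compactness (the definition requires $d(y,x_n)\rightarrow d(y,A)$, not merely $\rightarrow d(A,B)$).
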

\begin{proof} Let $\{y_n\} \subseteq B_0$ a convergent sequence and $y \in X$ its limit. The fact that $B$ is closed implies that $y\in B$. On
the other hand, since the point $y_n$ is in $B_0$,
there exists $x_n \in A_0$ such that $d(y_n,x_n)=d(A,B)$ for every $n\in \mathbb{N}$.
As a consequence, we get that
$$d(x_n,y) \rightarrow d(A,B)$$
when $n$ goes to infinity. Since $A$ is closed and approximatively compact
with respect to $B$, we also have that there exists a subsequence
of $\{x_n\}$ that converges to a point $z\in A$. Finally,
by combining the information we have, we get that $d(z,y)=d(A,B)$, which implies
that $y \in B_0$.
\end{proof}

Next example shows that the opposite implication of the previous proposition is not true. As a consequence of
this fact, we get that Theorem \ref{ext1} and Corollary \ref{co1} are actually strictly more general than the counterpart
results in \cite{Sadiq3}.

\begin{example}\label{extomas} Let $X$ denote the Banach space given by the set $\ell_2$ endowed with the norm
$$\|x\|=max\{\|x\|_2,\sqrt2\|x\|_\infty\}.$$
Notice that this norm is equivalent to $\|\cdot\|_2$ and therefore $(X,\|\cdot\|)$ is also
a reflexive Banach space.

Let $A=\{x = (x_n) \in X : x_1 = 1 \text{ and } \|x\| \leq \sqrt2\}$ and $B=\{z=(2,0,0, \ldots)\}$.
It is easy to see that both $A$ and $B$ are closed and convex. Notice that $e_1 \in A$, $\|z-e_1\|=\sqrt2$ and
moreover, if $y\in A$ we have that $y=(1,y_2,y_3, \ldots)$ with $\sum_{n \geq 2} y_n^2 \leq 1$ and
$|y_n| \leq 1$ for every $n\in \mathbb{N}$. As a consequence, we have that $\|z-y\|_2 \leq \sqrt2$ and $\|z-y\|_\infty =1$ for every $y\in A$ and
therefore $d(A,B)=\sqrt2=d(z,y)$ for every $y\in A$.

Let $\{u_n\}$ be the sequence in $A$ given by $u_n=e_1+e_n$. Since this sequence
does not have any convergent subsequence, we conclude that $A$ is not approximatively compact
with respect to $B$.
\end{example}

It is worth mentioning that in general the set $A_0$ is closed whenever the metric space $X$ is
a reflexive Banach space and the sets $A$ and $B$ are closed with $A$ convex. Therefore, Example \ref{extomas} is just a particular case in this respect where $A$ is not approximatively compact with respect to $B$ and $B_0$ is closed.

The next result deals with proximal contractions of the second kind. Again, a compactness assumption is removed, now from
Theorem $3.1$ in \cite{Sadiq4}, to consider a completeness hypothesis. Also notice that here we do not impose any continuity property on $T$.

\begin{theorem} \label{ext2} Let $A$ and $B$ be two nonempty subsets of
a complete metric space $X$, $T : A \rightarrow B$ a proximal contraction of the second kind that preserves isometric distance with respect to an isometry $g : A \rightarrow A$. Suppose that $T(A_0) \subseteq B_0$ and $A_0 \subseteq g(A_0)$. If $T(A_0)$ is
nonempty and closed, then there exists a point $x \in A$ such that $d(gx,Tx)=d(A,B)$. Moreover, if $\hat{x}$ is another point in $A$
for which $d(g\hat{x},T\hat{x})=d(A,B)$, then $Tx=T\hat{x}$.
\end{theorem}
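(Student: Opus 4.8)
The plan is to imitate the iteration scheme from the proof of Theorem \ref{ext1}, but with the crucial twist that a proximal contraction of the second kind only controls distances between \emph{image} points, so I will track the sequence $\{Tx_n\}$ rather than $\{x_n\}$ itself. First I would fix $x_0 \in A_0$ and construct a sequence $\{x_n\} \subseteq A_0$ satisfying $d(gx_{n+1},Tx_n)=d(A,B)$ for every $n \geq 0$. This is available exactly as in Theorem \ref{ext1}: since $Tx_n \in T(A_0) \subseteq B_0$, there is a point of $A_0$ lying at distance $d(A,B)$ from $Tx_n$, and by $A_0 \subseteq g(A_0)$ this point has the form $gx_{n+1}$ with $x_{n+1}\in A_0$.

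Next I would feed the relations $d(gx_{n+1},Tx_n)=d(A,B)$ and $d(gx_n,Tx_{n-1})=d(A,B)$ into the definition of proximal contraction of the second kind (with $u=gx_{n+1}$, $v=gx_n$, $x=x_n$, $y=x_{n-1}$), obtaining $d(Tgx_{n+1},Tgx_n) \leq \alpha\, d(Tx_n,Tx_{n-1})$. Here the hypothesis that $T$ preserves isometric distance with respect to $g$ is what makes the argument go through: it rewrites the left-hand side as $d(Tx_{n+1},Tx_n)$, so that $d(Tx_{n+1},Tx_n) \leq \alpha\, d(Tx_n,Tx_{n-1})$. Iterating shows $\{Tx_n\}$ is Cauchy, and using completeness of $X$ together with the closedness of $T(A_0)$, this sequence converges to a limit lying in $T(A_0)$, which I would write as $Tw$ for some $w \in A_0$.

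To produce the desired point I would again invoke $T(A_0) \subseteq B_0$ and $A_0 \subseteq g(A_0)$: since $Tw \in B_0$, there is $x \in A_0$ with $d(gx,Tw)=d(A,B)$. Combining this with $d(gx_{n+1},Tx_n)=d(A,B)$ in the contraction inequality and applying the isometric-distance property once more yields $d(Tx,Tx_{n+1}) \leq \alpha\, d(Tw,Tx_n)$; letting $n \to \infty$ and using $Tx_n \to Tw$ forces $Tx=Tw$, whence $d(gx,Tx)=d(gx,Tw)=d(A,B)$. This $x$ is the point claimed. For the final assertion, I would apply the contraction to the two equalities $d(gx,Tx)=d(A,B)$ and $d(g\hat{x},T\hat{x})=d(A,B)$, getting $d(Tgx,Tg\hat{x}) \leq \alpha\, d(Tx,T\hat{x})$, i.e. $d(Tx,T\hat{x}) \leq \alpha\, d(Tx,T\hat{x})$ after the isometric-distance rewrite; since $\alpha<1$ this gives $Tx=T\hat{x}$.

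The main obstacle, and the point to keep in mind throughout, is that in contrast to the first-kind case one cannot expect $\{x_n\}$ itself to be Cauchy or convergent, so every estimate has to be carried out at the level of the image sequence $\{Tx_n\}$. The isometric-distance assumption is precisely the device that lets one pass between $d(Tgx_{n+1},Tgx_n)$ and $d(Tx_{n+1},Tx_n)$, closing the loop; without it the contraction inequality would produce terms involving $Tg$-iterates that cannot be directly telescoped. This also explains why the conclusion is phrased only up to the value $Tx$ (rather than asserting uniqueness of $x$), since the second-kind condition gives no control over the preimages.
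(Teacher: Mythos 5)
Your proposal is correct and follows essentially the same route as the paper: the same iterative construction of $\{x_n\}$ with $d(gx_{n+1},Tx_n)=d(A,B)$, the same passage to the image sequence $\{Tx_n\}$ via the isometric-distance property, convergence in the closed set $T(A_0)$ to some $Tw$, and the same final contraction arguments yielding $Tx=Tw$ and $Tx=T\hat{x}$. Your closing remarks on why only the value $Tx$ (not the point $x$) is unique match the paper's treatment as well.
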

\begin{proof} Following similar patterns to those considered in Theorem \ref{ext1}, we have
that there exists a sequence $\{x_n\} \subseteq A$ satisfying
\begin{equation}\label{1}
d(gx_{n+1},Tx_n)=d(A,B).
\end{equation}
Since $T$ is a proximal
contraction of the second kind,
$$
d(Tgx_{n+1},Tgx_n) \leq \alpha d(Tx_{n-1},Tx_n)
$$
for every $n\in \mathbb{N}$. By using the isometric character of $g$ and the fact that $T$ preserves
isometric distance with respect to $g$, we get
$$d(Tx_{n+1},Tx_n) \leq \alpha d(Tx_n, Tx_{n-1})$$
for every $n\in \mathbb{N}$. Thus, as a consequence, we conclude that $\{Tx_n\}$ is a Cauchy sequence. By using the completeness of the space $X$ and the fact that $T(A_0)$ is closed, we obtain that $\{Tx_n\}$ converges to some point $y$ in $T(A_0) \subseteq B_0$. Hence, $y=Tu$ for some $u \in A_0$. Moreover, we also deduce that there exists $z\in A_0$ such that $d(z,Tu)=d(A,B)$. Since $A_0 \subseteq g(A_0)$, we have that
$z=gx$ for some $x \in A_0$. Then, $d(gx,Tu)=d(A,B)$. By using (\ref{1}) and previous equality, we get
$$d(Tgx,Tgx_{n+1})=d(Tx,Tx_{n+1}) \leq \alpha d(Tu,Tx_n).$$
Consequently, we obtain that $Tx=Tu$ and hence $d(gx,Tx)=d(A,B)$. The uniqueness of such a point $x$ follows similar patterns to those considered in \cite[Theorem 3.1]{Sadiq4}. Let $\hat{x}$ be another point in $A_0$ such that
$$d(g\hat{x},T\hat{x})=d(A,B).$$
Then, again by using the properties of $T$ and $g$ and reasoning as above, we get that
$$d(Tgx,Tg\hat{x})=d(Tx,T\hat{x}) \leq \alpha d(Tx,T\hat{x}).$$
Consequently, $T\hat{x}$ must be equal to $Tx$, which completes
the proof.
\end{proof}

\begin{remark} Notice that, in general, if a second proximal contraction $T$ is also continuous and $B$ is approximatively compact with respect to $A$, then $T(A_0)$ is sequentially closed in $B_0$ for sequences $\{Tx_n\} \subseteq T(A_0)$ such that
$d(gx_{n+1},Tx_n)=d(A,B)$ for every $n\in \mathbb{N}$, that is what has essentially been considered in the proof of previous theorem.

\end{remark}

If we consider again the particular case of $g$ equal to the identity map, we obtain an
improvement of Corollary $3.2$ of \cite{Sadiq4} as well as a best proximity point result for proximal contractions of the second kind.

\begin{corollary} Let $A$ and $B$ be two nonempty subsets of
a complete metric space $X$ and $T : A \rightarrow B$ a proximal
contraction of the second kind.
Suppose that $T(A_0) \subseteq B_0$. If $T(A_0)$ is
nonempty and closed, then there exists a best proximity point $x \in A$. Moreover, if $\hat{x}$ is another best proximity point in $A$,
 then $Tx=T\hat{x}$.

\end{corollary}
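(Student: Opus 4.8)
The plan is to derive this corollary as the special case of Theorem~\ref{ext2} obtained by taking $g$ to be the identity map $\mathrm{id}_A$ on $A$, exactly as anticipated in the paragraph preceding the statement. First I would check that the extra hypotheses of Theorem~\ref{ext2} are automatically met by this choice of $g$. The identity map is trivially an isometry of $A$. Since $\mathrm{id}_A\, x = x$ for every $x\in A$, we have $T\,\mathrm{id}_A\,x = Tx$ for all $x$, so the equality $d(T\mathrm{id}_A x, T\mathrm{id}_A y) = d(Tx,Ty)$ expressing preservation of isometric distance holds trivially. Likewise $g(A_0) = \mathrm{id}_A(A_0) = A_0$, whence the inclusion $A_0 \subseteq g(A_0)$ is automatic. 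The remaining hypotheses, namely that $A$ and $B$ are nonempty in a complete space, that $T$ is a proximal contraction of the second kind, that $T(A_0)\subseteq B_0$, and that $T(A_0)$ is nonempty and closed, are precisely those assumed in the corollary.

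Having verified the hypotheses, I would simply invoke Theorem~\ref{ext2}. Its conclusion furnishes a point $x\in A$ with $d(gx,Tx) = d(A,B)$; since $gx = x$, this reads $d(x,Tx) = d(A,B)$, which is exactly the defining condition for $x$ to be a best proximity point of $T$. For the second assertion, if $\hat{x}\in A$ is another best proximity point, then $d(\hat{x}, T\hat{x}) = d(A,B)$, i.e.\ $d(g\hat{x}, T\hat{x}) = d(A,B)$, and so the uniqueness clause of Theorem~\ref{ext2} yields $Tx = T\hat{x}$.

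I do not expect any genuine obstacle here, since the content is entirely contained in Theorem~\ref{ext2} and the argument consists only in observing that the three conditions involving $g$ collapse to trivialities when $g$ is the identity, and that the equality $d(gx,Tx)=d(A,B)$ then specializes to the best proximity condition. The one point worth recording explicitly is that, as for contractions of the second kind in general, one obtains uniqueness of the best proximity point only up to its image under $T$ (that is, $Tx$ is uniquely determined), rather than uniqueness of the point $x$ itself.
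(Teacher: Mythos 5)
Your proposal is correct and coincides with the paper's intended argument: the corollary is obtained precisely by specializing Theorem~\ref{ext2} to $g=\mathrm{id}_A$, and your verification that the isometry, distance-preservation, and $A_0\subseteq g(A_0)$ hypotheses become trivial is exactly what is needed. The paper gives no further proof beyond this observation, so there is nothing missing.
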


\section{Appendix: A Property That Involves the Pair $(A,B)$.}

One interesting problem that arises when dealing with Section \ref{sec2} is whether it is possible to establish or formulate
its results by using a completeness property that involves both set $A$ and $B$ in such a way that we preserve the classical structure of recent theorems concerning best proximity points (see, apart from the property of being approximatively compact with respect to another set, the properties UC, WUC or HW considered when dealing with best proximity points in cyclic mappings \cite{esfe2,fer,suzu}). In this regard, we
introduce a useful property involving the sets $A$ and $B$.

\begin{definition} Let $A$ and $B$ be nonempty subsets of a metric space
$(X,d)$. The pair $(A,B)$ is said to satisfy property WAC if for
every sequence $\{ x_n\}$ in $A$ and every
point $p$ in $B$ such that
$$\lim_nd(x_n,p)= d(A,B),$$
then it is the case that $p \in B_0$.
\end{definition}

Next proposition shows that this property is weaker than the approximative compactness considered in Section \ref{sec2}.

\begin{proposition} Let $A$ and $B$ be nonempty subsets of a metric space
$(X,d)$. If $A$ is approximatively compact with respect to $B$ and $A$ is closed, then the pair $(A,B)$ has property WAC.
\end{proposition}

\begin{proof} We omit the proof.

\end{proof}

The following proposition establishes some general situations where we find property WAC between sets.

\begin{proposition}Let $A$ and $B$ be two nonempty subsets of a metric space $X$.

\begin{itemize}

\item [(1)] If $X$ is a reflexive Banach space and the sets $A$ and $B$ are such that $A$ is closed and convex and $B$ is closed, then $(A,B)$ has property
WAC.

\item [(2)] If $X$ is a strictly convex metric space with the
nonempty intersection property (see Section \ref{se2} for definitions) and the sets $A$ and $B$ are such that $A$ is closed and convex and $B$ is closed, then $(A,B)$ has property
WAC.

\end{itemize}
\end{proposition}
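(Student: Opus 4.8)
The plan is to observe that in both items property WAC reduces to a single proximinality statement, after which the two cases diverge only in the compactness-type tool employed. Given a sequence $\{x_n\}$ in $A$ and a point $p\in B$ with $\lim_n d(x_n,p)=d(A,B)$, one always has $d(x,p)\ge d(A,B)$ for every $x\in A$, so that $d(p,A):=\inf_{x\in A}d(x,p)=d(A,B)$ and $\{x_n\}$ is a minimizing sequence for the distance from $p$ to $A$. Hence it suffices to produce a single point $x\in A$ with $d(x,p)=d(A,B)$: such a point witnesses $p\in B_0$ directly from the definition of $B_0$. Everything therefore hinges on showing that this infimum is attained.

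For item (1), I would argue via weak compactness. Since $d(x_n,p)\to d(A,B)$, the sequence $\{x_n\}$ is bounded, and reflexivity of $X$ furnishes a subsequence $\{x_{n_k}\}$ converging weakly to some $x\in X$. Because $A$ is closed and convex it is weakly closed by Mazur's theorem, so $x\in A$. Finally, the norm is weakly lower semicontinuous, whence $\|x-p\|\le \liminf_k\|x_{n_k}-p\|=d(A,B)$; combined with $\|x-p\|\ge d(A,B)$ this yields $d(x,p)=d(A,B)$ and therefore $p\in B_0$.

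For item (2), I would replace weak compactness by the nonempty intersection property. Setting $d:=d(A,B)$, I define for each $n$ the set $C_n:=A\cap B\!\left(p,\,d+\tfrac{1}{n}\right)$. Each $C_n$ is nonempty because $d=\inf_{x\in A}d(x,p)$ forces points of $A$ strictly within radius $d+\tfrac{1}{n}$ of $p$; it is closed as an intersection of closed sets and bounded as a subset of a ball; and it is convex, since $A$ is convex while the closed ball $B\!\left(p,\,d+\tfrac{1}{n}\right)$ is convex in a strictly convex space (this is precisely the content of the strict convexity hypothesis, together with the fact, noted in Section~\ref{se2}, that such spaces are uniquely geodesic). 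Moreover $C_{n+1}\subseteq C_n$ because the radii decrease. The nonempty intersection property then provides a point $x\in\bigcap_n C_n\subseteq A$, which satisfies $d(x,p)\le d+\tfrac{1}{n}$ for all $n$ and hence $d(x,p)=d=d(A,B)$, giving $p\in B_0$.

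The main obstacle I anticipate is concentrated in item (2), in verifying that the family $\{C_n\}$ meets all the hypotheses of the nonempty intersection property: specifically, deducing the convexity of the balls $B\!\left(p,\,d+\tfrac{1}{n}\right)$ directly from the definition of strict convexity, and ensuring each $C_n$ is genuinely nonempty (which is exactly why the radii must be taken strictly larger than $d(A,B)$ rather than equal to it). Item (1), by contrast, is essentially a repackaging of the classical proximinality of closed convex sets in reflexive spaces, so the only care required there is to invoke Mazur's theorem and the weak lower semicontinuity of the norm correctly.
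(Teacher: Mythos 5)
Your proposal is correct and follows essentially the same route as the paper: in (1) weak compactness from reflexivity plus weak closedness of the closed convex set $A$ and weak lower semicontinuity of the norm, and in (2) the nested sets $C_n=A\cap B(p,d(A,B)+1/n)$, shown to be closed, convex, bounded and nonempty, fed into the nonempty intersection property. Your explicit passage to a weakly convergent subsequence in (1) is in fact slightly more careful than the paper's wording.
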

\begin{proof} $(1)$ Let $\{x_n\} \subseteq A$ and $p \in B$ such that $\lim_nd(x_n,p)= d(A,B)$. Since $\{x_n\}$ is bounded, we have that
 $x_n$ weakly converges to a point $x \in A$. By using that fact that the norm in $X$ is weakly lower semicontinuous, we also get that
  $d(x,p)\leq \liminf_{n} d(x_n,p)=d(A,B)$. Consequently, we conclude that $p \in B_0$.

$(2)$ Let $\{x_n\} \subseteq A$ and $p \in B$ such that $\lim_nd(x_n,p)= d(A,B)$. Let us consider for
every $n\in \mathbb{N}$ the subsets $C_n$ of $A$ given by $C_n =A \cap B(p,d(A,B)+1/n)$. Since balls are convex in the context of strictly convex metric spaces, we may assert that $C_n$ are closed, convex and bounded. Moreover, it is not difficult to see that for every $n\in \mathbb{N}$ there exists $m_0 \in \mathbb{N}$ such that $x_m \in C_n$ for every $m \geq m_0$. Consequently we conclude that $C_n$ is also nonempty for every natural number $n$. By using the fact that $X$ has the nonempty intersection property we get that $\bigcap C_n \neq \emptyset$.
 Let $x$ be a point in this intersection. Since $x\in B(p,d(A,B)+1/n)$ for every $n\in \mathbb{N}$, we get that $d(p,x)=d(A,B)$ and thus $p \in B_0$.
\end{proof}

As a consequence of the first statement of this proposition, we deduce that the pair of sets $(A,B)$ considered in Example \ref{extomas} also have property WAC and therefore property WAC is strictly weaker than approximative compactness.

\begin{remark} For those interested in properties UC, WUC or HW, notice that in general property WUC implies approximative compactness (and thus property UC too). Then, property WAC is more general than properties WUC and UC. However, it is possible to find pairs of sets with property HW and without property WAC. For this aim, it suffices to consider $A=\{-1/n : n \in \mathbb{N}\}$ and $B=\{1+1/n : n\in \mathbb{N}\} \cup \{$$1 \}$.
\end{remark}

The proposition and the example given below show that property WAC is strictly stronger than the fact that $A_0$ is closed. For this reason, we have not consider property WAC in Section \ref{sec2}.

\begin{proposition} Let $A$ and $B$ be two nonempty subsets of a metric space. If $A$ is closed and the pair $(B,A)$ has property WAC, then $A_0$ is closed.

\end{proposition}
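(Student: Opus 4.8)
The plan is to establish closedness of $A_0$ via the sequential criterion. First I would take an arbitrary sequence $\{x_n\}$ contained in $A_0$ that converges to some point $x\in X$, and aim to show $x\in A_0$. Since $A$ is closed and $A_0\subseteq A$, the limit $x$ already belongs to $A$, so only the defining proximity condition remains to be verified.

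The key idea is to produce, for each $n$, a witness in $B$ and then feed the resulting sequence into property WAC of the pair $(B,A)$. Concretely, since $x_n\in A_0$, for every $n$ there exists $y_n\in B$ with $d(x_n,y_n)=d(A,B)$. I would then consider the sequence $\{y_n\}\subseteq B$ together with the point $x\in A$: observe that the conclusion of property WAC for the pair $(B,A)$ is precisely that such a point $x$ lies in $A_0$, which is exactly what I want.

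To invoke WAC I must check the hypothesis $\lim_n d(y_n,x)=d(B,A)=d(A,B)$. The upper bound comes from the triangle inequality, $d(y_n,x)\le d(y_n,x_n)+d(x_n,x)=d(A,B)+d(x_n,x)$, whose right-hand side tends to $d(A,B)$ because $x_n\to x$; the matching lower bound is immediate, since $y_n\in B$ and $x\in A$ force $d(y_n,x)\ge d(A,B)$. Squeezing the two estimates gives the required limit, and property WAC of $(B,A)$ then yields $x\in A_0$, completing the argument.

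The proof is essentially routine: there is no genuine obstacle beyond the triangle inequality and the definition of $d(A,B)$ as an infimum. The only point demanding care is the bookkeeping in applying WAC to the \emph{reversed} pair $(B,A)$ — one must keep track that the distinguished subset attached to the second entry is $A_0$, and that $d(B,A)=d(A,B)$, so that the conclusion lands in the correct set.
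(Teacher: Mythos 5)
Your proof is correct and is exactly the argument the paper intends: the paper only says the proof ``follows similar patterns to those considered in Proposition 3,'' and your argument (pass to the limit in $A$ by closedness, extract witnesses $y_n\in B$ with $d(x_n,y_n)=d(A,B)$, squeeze $d(y_n,x)\to d(A,B)$ via the triangle inequality and the infimum bound, then apply WAC of the reversed pair $(B,A)$) is precisely that pattern with approximative compactness replaced by property WAC. No issues.
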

\begin{proof} The proof follows similar patterns to those considered in Proposition \ref{relacion}.
\end{proof}

Next example shows that the inverse implication of the previous proposition is not true.

\begin{example} Let $X$ be the Euclidean plane. Let $A=\{(x,1) : x \geq 1\} \cup \{(x,0) : x \geq 1\} $ and $B=\{(0,y) : y\neq 0 \}$. Consider the sequence $\{y_n\} \subseteq B$ given by $\{y_n\}=\{(0,-1/n)\}_{n\in \mathbb{N}}$ and the point $q=(1,0) \in A$. Notice that
$A_0=\{(1,1)\}$, so it is closed, but however $(B,A)$ does not have property WAC since $d(y_n,q)\rightarrow 1=d(A,B)$ but $q$ is not in $A_0$.

\end{example}

\begin{acknowledgements}
The author was partially supported by the Ministery of Science and Technology of Spain, Grant
MTM2009-10696-C02-01 and La Junta de Antaluc´ia, Grant P08-FQM-03543.

\end{acknowledgements}

%

%
%
%

\end{document}